\titleformat{\section}{\large\bfseries}{\thesection}{1em}{}
\let\oldcite\cite
\renewcommand{\cite}[1]{\mbox{\oldcite{#1}}}
\newtheorem{theorem}{Theorem}
\theoremstyle{definition}
\newtheorem*{remark*}{Remark}
\title{Asymptotic Normality of $U$-Statistics is Equivalent to Convergence in the Wasserstein Distance}
\date{\today}
\author{Marius Kroll\footnote{Faculty of Mathematics, Ruhr-Universität Bochum, \href{mailto:marius.kroll@rub.de}{marius.kroll@rub.de}}}
\begin{document}
\maketitle

\begin{abstract}
  We prove the claim in the title under mild conditions which are usually satisfied when trying to establish asymptotic normality. We assume strictly stationary and absolutely regular data.\\[2mm]
  \noindent\textbf{MSC2020 Classification:} Primary: 60F05; 60F25. Secondary: 60G50.\\
  \noindent\textbf{Keywords and phrases:} $U$-Statistics; Limit Theorems; Wasserstein Distance; Mixing.

\end{abstract}

\medskip
\section{Introduction}
Consider a $U$-statistic
$$
  U_h := U_h(X_1, \ldots, X_n) := {n \choose m}^{-1} \sum_{1 \leq i_1 < \ldots < i_m \leq n} h(X_{i_1}, \ldots, X_{i_m}),
$$
where the observations $X_1, \ldots, X_n$ take values in some separable metric space $\mathcal{X}$ and $h : \mathcal{X}^m \to \mathbb{R}$ is a kernel function that we assume without loss of generality to be centred and symmetric in its arguments. The class of $U$-statistics is a large one; it includes the sample mean and variance, estimators for the $k$-th moments, Gini's mean difference, pointwise evaluations of the cumulative distribution function and the distance covariance \citep{szekely_rizzo_bakirov:2007}. For a general introduction and more examples of $U$-statistics, see \cite{serfling:approximation_theorems} or \cite{vandervaart:asymptotic_statistics}.

Suppose you want to establish asymptotic normality of $U_h$, i.e.\@ you want to show that there is some $\sigma^2 > 0$ such that $\sqrt{n} U_h$ converges weakly to $\mathcal{N}\left(0, \sigma^2\right)$. Possible motivations for this are the construction of confidence intervals, finding critical values for hypothesis testing or completing an intermediate step in a larger proof (e.g.\@ bootstrap consistency). Following classical approaches in $U$-statistic theory, you consider the Hoeffding decomposition \citep{hoeffding:1948}, i.e.\@ the representation
\begin{equation}
  \label{eq:hoeffding_zerlegung}
  U_h = \sum_{i=0}^m {m \choose i} U_n^{(i)}(h;\xi),
\end{equation}
where $\xi := \mathcal{L}(X_1)$ and $U_n^{(i)}(h;\xi)$ is the $U$-statistic with kernel function
$$
  h_i(x_1, \ldots, x_i ; \xi) := \sum_{k=0}^i {i \choose k} (-1)^{i-k} \,\textbf{h}_k(x_1, \ldots, x_k; \xi),
$$
and
$$
  \textbf{h}_k(x_1, \ldots, x_k;\xi) := \int h(x_1, \ldots, x_m) ~\mathrm{d}\xi^{m-k}(x_{k+1}, \ldots, x_m).
$$
Because $h$ is centred, the constant term $U_n^{(0)}$ is equal to $0$ for all $n$. If you can show that $\sqrt{n}\,U_n^{(1)}(h;\xi)$ -- which is just a partial sum with normalisation $1/\sqrt{n}$ -- is asymptotically normal and that the remaining terms in Eq.\@ \eqref{eq:hoeffding_zerlegung} are $o_\mathbb{P}(1/\sqrt{n})$, you have successfully proven asymptotic normality of $U_h$. To do either of these things, you probably assume some moment conditions on your kernel $h$: second moments if your data are independent and identically distributed (i.i.d.), $p$-moments for some $p > 2$ if they are strictly stationary and mixing. We posit that these assumptions are already sufficient to establish the asymptotic normality not only in a weak sense, but also in the Wasserstein metric $d_2$, defined by
$$
  d_2^2(X,Y) := \inf \mathbb{E}\left[(X' - Y')^2\right],
$$
where the infimum is taken over all random vectors $(X', Y')$ such that the distributions of $X'$ and $Y'$ are equal to those of $X$ and $Y$, respectively. Convergence in $d_2$ is stronger than weak convergence alone as it implies certain moment convergences as well. More precisely, $d_2(Y_n, Y) \to 0$ if and only if $Y_n$ converges weakly to $Y$ and $\mathbb{E}[Y_n^2] \to \mathbb{E}[Y^2]$ \citep[Theorem 6.9 in][]{villani:optimal_transport}.

\section{Main Result}
\begin{theorem}
  \label{thm:hauptresultat}
  Let $(X_k)_{k \in \mathbb{N}}$ be a process with values in a separable metric space $\mathcal{X}$ and $h: \mathcal{X}^m \to \mathbb{R}$ be a centred and symmetric kernel. Suppose that one of the following two assumptions holds:
  \begin{enumerate}
    \item $(X_k)_{k \in \mathbb{N}}$ is i.i.d. and $\|h(X_1, \ldots, X_m)\|_{L_2} < \infty$.
    \item $(X_k)_{k \in \mathbb{N}}$ is strictly stationary and absolutely regular and $\|h(X_1, \ldots, X_m)\|_{L_p} < \infty$ for some $p > 2$. Furthermore, the mixing coefficients of $(X_k)_{k \in \mathbb{N}}$ satisfy the growth rate $\beta(n) = \mathcal{O}\left(n^{-r}\right)$ for some $r > mp/(p-2)$.
  \end{enumerate}
  Then $\sqrt{n}U_h$ converges in distribution to $\mathcal{N}\left(0, \sigma^2\right)$ for some $\sigma^2 > 0$ if and only if it converges in the Wasserstein distance $d_2$ to the same limit.
\end{theorem}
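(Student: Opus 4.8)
\section*{Proof proposal}

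The plan is to collapse the entire biconditional onto a single second-moment statement. One implication is free: convergence in $d_2$ always entails weak convergence, so the direction ``$d_2 \Rightarrow$ weak'' needs nothing beyond the characterisation of $d_2$-convergence quoted in the introduction. For the converse, I would apply that same characterisation with limit $Z \sim \mathcal{N}(0,\sigma^2)$, where $\mathbb{E}[Z^2]=\sigma^2$: once $\sqrt{n}U_h$ is known to converge weakly to $\mathcal{N}(0,\sigma^2)$, it converges in $d_2$ to the same limit \emph{if and only if} $\mathbb{E}[(\sqrt{n}U_h)^2] = n\,\mathbb{E}[U_h^2] \to \sigma^2$ (recall $\mathbb{E}[U_h]=0$). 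Thus the whole theorem reduces to showing that the rescaled variance converges to the variance of the Gaussian limit.

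To evaluate $\lim_n n\,\mathbb{E}[U_h^2]$ I would peel off the linear part of the Hoeffding decomposition. Set $S_n := \sqrt{n}\,m\,U_n^{(1)}(h;\xi) = \tfrac{m}{\sqrt n}\sum_{k=1}^n \mathbf{h}_1(X_k;\xi)$ and $R_n := \sqrt{n}\sum_{i=2}^m \binom{m}{i}U_n^{(i)}(h;\xi)$, so that $\sqrt{n}U_h = S_n + R_n$. Two facts then suffice: (i) $\mathbb{E}[S_n^2]\to v^2$ for a finite long-run variance $v^2$, and (ii) $\|R_n\|_{L_2}\to 0$; together with Cauchy--Schwarz these give $n\,\mathbb{E}[U_h^2]\to v^2$. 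For (i), in the i.i.d.\ case $v^2 = m^2\,\mathrm{Var}(\mathbf{h}_1(X_1;\xi))$ is immediate, while in the mixing case $v^2 = m^2\bigl(\mathrm{Var}(\mathbf{h}_1(X_1;\xi)) + 2\sum_{j\ge1}\mathrm{Cov}(\mathbf{h}_1(X_1;\xi),\mathbf{h}_1(X_{1+j};\xi))\bigr)$, the series converging absolutely because a $\beta$-mixing covariance inequality bounds its $j$-th term by $C\,\beta(j)^{1-2/p} = \mathcal{O}(j^{-r(1-2/p)})$, and $r > mp/(p-2) \ge p/(p-2)$ forces $r(1-2/p) > 1$. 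For (ii), each $U_n^{(i)}$ with $i\ge 2$ is a fully degenerate $U$-statistic, and the target bound is $\mathbb{E}[(\sqrt{n}U_n^{(i)})^2] = \mathcal{O}(n^{1-i})\to0$: in the i.i.d.\ case this is orthogonality of the Hoeffding decomposition together with $\mathrm{Var}(U_n^{(i)}) = \binom{n}{i}^{-1}\mathrm{Var}(h_i)$, and in the mixing case it follows from a Yoshihara-type covariance bound for degenerate kernels.

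It remains to identify $v^2$ with $\sigma^2$, and here I would finally use the hypothesised weak convergence. Since $\|R_n\|_{L_2}\to0$ we have $R_n\to0$ in probability, so $\sqrt{n}U_h$ and $S_n$ have the same weak limit by Slutsky's lemma. The partial sum $S_n$ obeys a central limit theorem with limit $\mathcal{N}(0,v^2)$ --- the Lindeberg--L\'evy theorem in the i.i.d.\ case, and the central limit theorem for absolutely regular stationary sequences under the stated moment and rate conditions in the mixing case (applicable once $v^2>0$). As $\sqrt{n}U_h$ is assumed to converge to $\mathcal{N}(0,\sigma^2)$, uniqueness of weak limits yields $\sigma^2 = v^2$. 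The degenerate possibility $v^2=0$ is excluded by the hypothesis $\sigma^2>0$: it would give $\mathbb{E}[S_n^2]\to0$, hence $\sqrt{n}U_h\to0$ in $L_2$ and weakly to $\delta_0$. Combining everything, $n\,\mathbb{E}[U_h^2]\to v^2 = \sigma^2$, and the $d_2$-convergence follows from the characterisation recalled in the first paragraph.

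The hard part is step (ii) in the mixing case: the bound $\mathbb{E}[(\sqrt{n}U_n^{(i)})^2] = \mathcal{O}(n^{1-i})$ for the degenerate higher-order projections of an absolutely regular sequence. I expect this to require splitting the double sum defining $\mathbb{E}[(U_n^{(i)})^2]$ according to the gaps between the two index blocks and estimating each piece with a covariance inequality for $\beta$-mixing sequences, the moment exponent $p$ and the mixing rate $r$ entering exactly through the condition $r > mp/(p-2)$ that makes the resulting sums finite. Everything else is either a classical limit theorem or routine bookkeeping of the Hoeffding decomposition.
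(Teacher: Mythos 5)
Your proof is correct, and it shares the paper's skeleton --- the Hoeffding decomposition, the bound $\|U_n^{(i)}(h;\xi)\|_{L_2} = \mathcal{O}(n^{-i/2})$ for the degenerate projections (your ``Yoshihara-type covariance bound for degenerate kernels'' is precisely Lemma 3 of Arcones (1998), which the paper also cites rather than proves, so you are on equal footing in treating it as a black box), and Villani's Theorem 6.9 --- but the endgame is genuinely different. The paper never identifies the limiting variance: it normalises the linear part by its own standard deviation $\sigma_n$ and invokes a converse-type result for strictly stationary sequences (Theorem 10.2 in Bradley's book), which says that asymptotic normality of $\sigma_n^{-1}S_n$ already forces uniform square-integrability; this yields $d_2$-convergence of $m\sqrt{n}\,U_n^{(1)}(h;\xi)$ via Villani, which is then transferred to $\sqrt{n}U_h$ through $d_2 \leq \|\cdot\|_{L_2}$ and the triangle inequality. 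You instead verify Villani's second-moment criterion for $\sqrt{n}U_h$ head-on: you compute the long-run variance $v^2$ by Davydov's covariance inequality (your rate bookkeeping $r(1-2/p)>1$ is correct), and you identify $v^2=\sigma^2$ by playing a forward CLT for absolutely regular sequences off against the hypothesised weak limit, using Slutsky and uniqueness of limits. You rightly sensed that this identification cannot be skipped: weak convergence plus $\mathbb{E}[S_n^2]\to v^2$ alone only gives $\sigma^2 \leq v^2$ by Fatou, so some extra input (your CLT, or the paper's uniform integrability) is genuinely needed. As for what each route buys: yours uses only standard forward limit theorems --- the mixing CLT you invoke is essentially Theorem 10.7 of Bradley, which the paper already cites for the weaker purpose of variance convergence --- and it makes the identification $\sigma^2 = v^2$ and the exclusion of the degenerate case $v^2=0$ explicit, points the paper leaves implicit; the paper's route needs no forward CLT at all, extracting uniform integrability directly from the assumed normality, which is the structurally more economical way to prove a converse-flavoured statement. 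One small slip in your write-up: under the mixing assumption $\mathbb{E}[U_h]$ need not vanish, since $h$ is centred with respect to $\xi^m$ and not with respect to the joint law of dependent coordinates; this is harmless, because Villani's criterion concerns second moments rather than variances, and second moments are what your argument actually uses.
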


Both assumptions in Theorem \ref{thm:hauptresultat} are rather standard when trying to establish asymptotic normality of a $U$-statistic. In the case of mixing data, one needs to assume $p$-moments for some $p > 2$ instead of just second moments in order to use covariance inequalities for mixing processes. Informally our result may be stated as `If you can prove asymptotic normality, you probably also have convergence in $d_2$.'

The proof of our theorem is not complicated. It makes use of the fact that the asymptotic normality of $U_h$ is determined by the first term in its Hoeffding decomposition, as outlined in the introduction of this article. As this first term is simply a partial sum we then use the fact that partial sums of strictly stationary and mixing data are asymptotically normal if and only if they are uniformly square-integrable (after renormalising with their variances), which implies convergence in $d_2$. Since we approximate $\sqrt{n} U_h$ by $\sqrt{n} U_n^{(1)}(h;\xi)$ not just in probability but in $L_2$, this is enough to show that the $d_2$-convergence of $\sqrt{n} U_n(h;\xi)$ carries over to $\sqrt{n} U_h$.

\begin{proof}[Proof of Theorem \ref{thm:hauptresultat}]
  Convergence in the Wasserstein distance implies weak convergence \citep[Theorem 6.9 in][]{villani:optimal_transport}, so one direction of our claim is trivial. We will therefore assume in both cases that $\sqrt{n} U_h$ converges weakly to some $\mathcal{N}\left(0, \sigma^2\right)$ and prove the convergence in $d_2$.

  Consider the Hoeffding decomposition \eqref{eq:hoeffding_zerlegung}. For any $q \geq 1$, Jensen's inequality implies that
  $$
    \|\textbf{h}_k(X_1, \ldots, X_k)\|_{L_q} \leq \|h(X_1, \ldots, X_m)\|_{L_q}.
  $$
  Furthermore, by the triangle inequality,
  $$
    \|h_i(X_1, \ldots, X_i)\|_{L_q} \leq \sum_{k=0}^i {i \choose k} \|h(X_1, \ldots, X_m)\|_{L_q} \leq 2^m \|h(X_1, \ldots, X_m)\|_{L_q}
  $$
  for any $0 \leq i \leq m$. Thus under the first assumption in the statement of the theorem, each of the Hoeffding-terms $h_i$ has finite second moments; under the second assumption, finite $p$-moments.

  We will show that the limiting behaviour of $\sqrt{n} U_h$ is solely determined by the first-order terms in the Hoeffding decomposition (this is standard practice in limit theorems for $U$-statistics). Note that
  \begin{equation}
    \label{eq:l2_abstand}
    \|\sqrt{n} U_h - m \sqrt{n} U_n^{(1)}(h;\xi)\|_{L_2} = \sqrt{n}\left\|\sum_{i=2}^m {m \choose i} U_n^{(i)}(h;\xi)\right\|_{L_2} \leq \sqrt{n} \sum_{i=2}^m {m \choose i} \left\|U_n^{(i)}(h;\xi)\right\|_{L_2}.
  \end{equation}
  If the first assumption in the statement of the theorem is satisfied, we can use elementary arguments for $U$-statistics of i.i.d. data \citep[e.g.\@ Lemma 5.2.1 A in][]{serfling:approximation_theorems} to see that $\left\|U_n^{(i)}(h;\xi)\right\|_{L_2}$ is of order $\mathcal{O}\left(n^{-i/2}\right)$. On the other hand, under the second assumption in the statement of the theorem, Lemma 3 in \cite{arcones:1998} implies that each of the norms $\|U_n^{(i)}(h;\xi)\|_{L_2}$ is bounded by $c \, n^{-i/2}$, where the constant $c$ only depends on the degree $m$, the rate of the mixing coefficients $\beta(n)$ and the $p$-th moment $\|h(X_1, \ldots, X_m)\|_{L_p}$. Thus, under either assumption, the entire right-hand side in Eq.\@ \eqref{eq:l2_abstand} is $\mathcal{O}\left(1/\sqrt{n}\right)$, and so $m \sqrt{n} U_n^{(1)} = m n^{-1/2}\sum_{k=1}^n h_1(X_k;\xi)$ converges in distribution to the same weak limit $\mathcal{N}\left(0, \sigma^2\right)$ as the original $U$-statistic $\sqrt{n} U_h$.

  Under the first assumption of the theorem, the variance of $(m/\sqrt{n}) \sum_{k=1}^n h_1(X_k;\xi)$ is equal to $m^2 \,\mathrm{Var}(h_1(X_1;\xi))$ for all $n \in \mathbb{N}$, and thus its limit for $n \to \infty$ trivially exists. Under the second assumption, this limit exists by virtue of Theorem 10.7 in \cite{bradley:mixing}. Let us denote this limit by $\tilde{\sigma}^2$, i.e.\@
  $$
    \tilde{\sigma}^2 = \lim_{n \to \infty} \mathrm{Var}\left(\frac{m}{\sqrt{n}} \sum_{k=1}^n h_1(X_k;\xi)\right) = \lim_{n \to \infty} n^{-1} \sigma_n^2,
  $$
  where $\sigma_n^2$ denotes the variance of the non-standardised partial sum $m \sum_{k=1}^n h_1(X_k;\xi)$. Now,
  \begin{equation}
    \label{eq:normierung_varianzen}
    \sigma_n^{-1} m \sum_{k=1}^n h_{1}(X_k;\xi) = \frac{\tilde{\sigma }}{n^{-1/2} \sigma_n} \,\frac{m}{\tilde{\sigma}\sqrt{n}} \sum_{k=1}^n h_1(X_k;\xi) \xrightarrow[n \to \infty]{\mathcal{D}} \mathcal{N}\left(0, (\sigma/\tilde{\sigma})^2\right).
  \end{equation}
  Theorem 10.2 in \cite{bradley:mixing} now implies that the random variables $\sigma_n^{-1}m \sum_{k=1}^n h_1(X_k;\xi)$ are uniformly square-integrable, which -- again by Eq.\@ \eqref{eq:normierung_varianzen} -- implies uniform square-integrability of $m n^{-1/2} \sum_{k=1}^n h_1(X_k;\xi) = m \sqrt{n} U_n^{(1)}(h;\xi)$. By Theorem 6.9 in \cite{villani:optimal_transport}, the uniform square-integrability combined with the already established weak convergence of $m \sqrt{n} U_n^{(1)}(h;\xi)$ implies that $m \sqrt{n} U_n^{(1)}(h;\xi)$ also converges in the Wasserstein distance $d_2$ to its weak limit $\mathcal{N}\left(0, \sigma^2\right)$. This proves our claim since
  \begin{align*}
    d_2\left(\sqrt{n}U_h, \mathcal{N}\left(0, \sigma^2\right)\right)
     & \leq d_2\left(\sqrt{n} U_h, m \sqrt{n}U_n^{(1)}(h;\xi)\right) + d_2\left(m \sqrt{n}U_n^{(1)}(h;\xi), \mathcal{N}\left(0, \sigma^2\right)\right) \\
     & \leq \|\sqrt{n} U_h - m \sqrt{n} U_n^{(1)}(h;\xi)\|_{L_2} + d_2\left(m \sqrt{n}U_n^{(1)}(h;\xi), \mathcal{N}\left(0, \sigma^2\right)\right),
  \end{align*}
  and this upper bound converges to $0$ by Eq.\@ \eqref{eq:l2_abstand} and the observations above.
\end{proof}

\begin{remark*}
  The final part of the proof of Theorem \ref{thm:hauptresultat} uses two results from the theory of mixing processes, namely Theorems 10.2 and 10.7 in \cite{bradley:mixing}. The latter is only needed to cover the second assumption in Theorem \ref{thm:hauptresultat}, and the former is used to show that the asymptotic normality of $m \sqrt{n} U_n^{(1)}(h;\xi)$ implies uniform square-integrability. If we are only interested in the i.i.d. case, i.e.\@ we are working under the first assumption of Theorem \ref{thm:hauptresultat}, we do not need Theorem 10.2 in \cite{bradley:mixing} for this: Under the i.i.d. assumption, Eq.\@ \eqref{eq:normierung_varianzen} and Feller's theorem imply that the sequence $(m \sum_{k=1}^n h_1(X_k;\xi))_{n \in \mathbb{N}}$ fulfills the Lindeberg condition. As pointed out by Billingsley \citep[see Eq.\@ (12.20) in][and the paragraph thereafter]{billingsley:convergence_of_probability_measures}, the Lindeberg condition implies uniform square-integrability. Thus, the proof of Theorem \ref{thm:hauptresultat} under i.i.d. assumptions requires only elementary arguments.
\end{remark*}

\bibliographystyle{abbrvnat}
\bibliography{ustat_bib}
\end{document}